\numberwithin{equation}{section}
\newtheorem{teo}{Theorem }[section]
\newtheorem{lem}[teo]{Lemma}
\newcommand{\N}{{\mathbb N}}
\DeclareMathOperator*{\esssup}{ess\,sup}
\begin{document}


\title[ Local Dispersive Equations ]
      {Generalized dispersive equations of higher orders posed on bounded intervals: local theory }
      
\author{N. A. Larkin	\& 
J. Luchesi$^\dag$}

\address{Nikolai A. Larkin \newline
Departamento de Matem\'{a}tica, Universidade Estadual de Maring\'{a},
Av. Colombo 5790: Ag\^{e}ncia UEM, 87020-900, Maring\'{a}, PR, Brazil}
\email{nlarkine@uem.br}

\address{Jackson Luchesi \newline
Departamento de Matem\'{a}tica, Universidade Tecnol\'{o}gica Federal do Paran\'{a} - C\^{a}mpus Pato Branco, Via do Conhecimento Km 1, 85503-390, Pato Branco, PR, Brazil} \email{jacksonluchesi@utfpr.edu.br}

\keywords {Higher-order dispersive equations, local solutions, bounded domains.}
\thanks{}
\
\thanks{\it {2010 Mathematics Subject Classification: 35M20, 35Q72.} }
\thanks{$^\dag$ Corresponding author}

\begin{abstract}
	Initial-boundary value problems for nonlinear dispersive equations of evolution of order $2l+1, \;l\in\mathbb{N}$ with a convective term of the form $u^ku_x,\;k\in\mathbb{N}$ have been considered on intervals $(0,L),\;L\in (0,+\infty).$
	The existence and uniqueness of local regular solutions have been established.

\end{abstract}

\maketitle

\section{Introduction}

Our goal in this paper is to study solvability of initial-boundary value problems for  one-dimensional generalized dispersive equations of higher orders posed on a bounded interval
\begin{equation}\label{1.1}
u_t+\sum_{j=1}^{l}(-1)^{j+1}D_x^{2j+1}u+u^kD_xu=0 \,\,\, \mbox{in}\,\,\, Q_T,
\end{equation}
where $x\in (0,L),\; Q_T=(0,T)\times (0,L);\;\;l,k\in \mathbb{N};\;\;T,L$ are real positive numbers. This equation includes as special cases classical dispersive equations: when $l=k=1$, we have the well-known Korteweg-de Vries (KdV) equation, see \cite{kato,lar,saut},  and when $k=1,l=2$, we have the Kawahara equation \cite{doronin2,kawa, marcio}. For  $k=1$, the Cauchy problem for dispersive equations of higher orders has been studied in \cite{ biagioni,Fam1,huo, linponce,ponce1,pilod,tao} and initial boundary value problems have been studied in \cite{chile, doronin2,familark,marcio, Sangare}.  Although dispersive equations were deduced for the whole real line, necessity to calculate numerically the Cauchy problem, approximating the real line by finite intervals \cite{chile}, implies to study initial-boundary value problems posed on bounded and unbounded intervals, \cite{doronin2,familark,kuvsh,lar2,marcio}.
What concerns \eqref{1.1} with $k>1, \;l=1$, called generalized Korteweg-de Vries equations, the Cauchy problem  for \eqref{1.1} has been studied in \cite{farah,Fonseka1,Fonseka2,ponce2,martel,merle}, where was proved that for $k=4$, called the critical case,  the initial problem is well-posed for small  initial data, whereas for arbitrary initial data, solutions may blow-up in a finite time. The generalized KdV equation was intensively studied in order to understand the interaction between the dispersive term and  nonlinearity in the context of the theory of nonlinear dispersive evolution equations \cite{jeffrey,kaku,Rosier}. In \cite{lipaz}, an initial-boundary value problem for the generalized KdV equation with an internal damping posed on a bounded interval was studied in the critical case. Exponential decay of weak solutions for small initial data has been established. In \cite{araruna}, decay of weak solutions for $l=2,\;k=2$ was established.\\
Our goal in this work is to prove the existence and uniqueness of local regular solutions for all $k\in \mathbb{N}$ and for all finite positive $L$. \\
Our paper has the following structure: Chapter 1 is Introduction. Chapter 2 contains notations and auxiliary facts. In Chapter 3, formulation of  problems to be considered is given. In Chapter 4, we prove local existence and uniqueness of regular solutions. 

\section{Notations and auxiliary facts}

Let $x\in (0,L),\;\; D^i=\frac{\partial^i}{\partial x^i},\;i\in {\N};\;D=D^1.$  As in \cite{Adams} p. 23, we denote for scalar functions $f(x)$  the Banach space $L^p(0,L),\;1\leq p\leq+\infty \;$ with the norm: 
$$\| f \|_{L^p(0,L)}^p = \int_0^L |f(x)|^p\, dx,\; p\in[1,+\infty),\;
\|f\|_{\infty}=\esssup_{x\in(0, L)}|f(x)|.$$
For $p=2,\; L^2(0,L)$ is a Hilbert space  with the scalar product 
$$(u,v)=\int_0^L u(x)v(x) dx\; \mbox{and the norm}\; \|u\|^2=\int_0^L |u(x)|^2 dx.$$
The Sobolev space $W^{m,p}(0,L),\;m\in {\N}$ is a Banach space with the norm:
$$\| u \|_{W^{m,p}(0,L)}^p = \sum_{0 \leq |\alpha| \leq m} \|D^\alpha u \|_{L^p(0,L)}^p, \; 1\leq p<+\infty.$$
When $p=2,\;W^{m,2}(0,L)=H^m(0,L)$ is a Hilbert space with the following scalar product and the norm:
$$((u,v))_{H^m(0,L)}=\sum_{0\leq|j|\leq m}(D^j u,D^j v),\; \|u\|^2_{H^m(0,L)}=\sum_{0\leq|j|\leq m}\|D^j u\|^2.$$

Let $\mathcal{D}(0,L)$ or $\mathcal{D}([0,L])$ be the space of $C^{\infty}$ functions with compact support in $(0,L)$ or $[0,L]$. The closure of $C^{\infty}$ functions in $
W^{m,p}(0,L)$ is denoted by $W^{m,p}_0(0,L)$ and ($H^m_0(0,L)$ when $p=2$). For any space of functions, defined on an interval $(0,L)$, we omit the symbol $(0,L)$, for example, $L^p=L^p(0,L)$, $H^m=H^m(0,L)$, $H_0^m=H_0^m(0,L)$ etc.\\
We use the following version of the Gagliardo-Nirenberg inequality:
\begin{lem}\label{thm3.1}
	Let $u$ belong to $H_0^l(0,L)$, then the following inequality holds:
	\begin{equation}\label{2.4}
	\|u\|_{\infty}\leq \sqrt{2}\|D^lu\|^{\frac{1}{2l}}\|u\|^{1-\frac{1}{2l}}.
	\end{equation}
\end{lem}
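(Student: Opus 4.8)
The plan is to interpolate between $H^l_0$ and $L^2$ via the classical Gagliardo--Nirenberg machinery, but with explicit constants, exploiting the boundary condition $u\in H^l_0$. First I would reduce the general $l$ to the case $l=1$ by a bootstrapping argument: if $u\in H^l_0$, then $u\in H^1_0$ and one has the elementary estimate $\|u\|_\infty^2\le 2\|Du\|\,\|u\|$, which follows by writing $u(x)^2=2\int_0^x u u_x\,dx\le 2\|u\|\,\|Du\|$ (using $u(0)=0$) and taking the supremum over $x$. This is the base case of the inequality with $l=1$, and it already displays the constant $\sqrt2$.

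Next, to pass from $l=1$ to general $l$, I would iterate the interpolation inequality $\|D^ju\|\le C\|D^{j+1}u\|^{1/2}\|D^{j-1}u\|^{1/2}$ for intermediate derivatives $1\le j\le l-1$, valid for $u\in H^l_0$ (again using the vanishing boundary traces so that no boundary terms appear upon integrating by parts: $\|D^ju\|^2=-(D^{j+1}u,D^{j-1}u)\le\|D^{j+1}u\|\,\|D^{j-1}u\|$, which actually gives constant $1$). Chaining these and combining with the $l=1$ bound $\|u\|_\infty\le\sqrt2\|Du\|^{1/2}\|u\|^{1/2}$, one obtains $\|u\|_\infty\le\sqrt2\|D^lu\|^{\theta}\|u\|^{1-\theta}$ with the exponent $\theta$ determined by the telescoping product; a short induction shows $\theta=\frac1{2l}$, matching \eqref{2.4}. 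One has to check that the multiplicative constant stays exactly $\sqrt2$ and is not degraded by the intermediate steps --- this works precisely because each intermediate interpolation $\|D^ju\|^2\le\|D^{j+1}u\|\,\|D^{j-1}u\|$ carries constant $1$, so only the single application of the $l=1$ inequality contributes the $\sqrt2$.

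The step I expect to require the most care is making the exponent arithmetic clean: one must verify that the repeated geometric interpolation of $\|Du\|$ against $\|D^lu\|$ and $\|u\|$ yields exactly the weight $1/(2l)$ on $\|D^lu\|$, i.e.\ that $\|Du\|\le\|D^lu\|^{1/l}\|u\|^{1-1/l}$ (for $u\in H^l_0$, all boundary terms vanishing), and then that feeding this into $\|u\|_\infty\le\sqrt2\|Du\|^{1/2}\|u\|^{1/2}$ produces $\sqrt2\|D^lu\|^{1/(2l)}\|u\|^{1-1/(2l)}$. The inequality $\|Du\|\le\|D^lu\|^{1/l}\|u\|^{1-1/l}$ itself follows by an induction on $l$: assuming it for $l-1$ applied to the interval and using $\|D^ju\|^2\le\|D^{j+1}u\|\,\|D^{j-1}u\|$ with $j=l-1$, solve the resulting recursion for the exponents. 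Once that identity is in hand, \eqref{2.4} drops out immediately, so the entire argument is elementary and self-contained, relying only on integration by parts and the Cauchy--Schwarz inequality together with the hypothesis $u\in H^l_0(0,L)$.
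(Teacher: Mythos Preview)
Your proposal is correct and follows essentially the same route as the paper: prove the $l=1$ case by writing $u^2(x)=\int_0^x (u^2)'\le 2\|u\|\,\|Du\|$, then establish $\|Du\|\le\|D^lu\|^{1/l}\|u\|^{1-1/l}$ for $u\in H_0^l$ via the constant-$1$ integration-by-parts interpolations, and substitute. The only cosmetic difference is the organization of the induction for the latter inequality: the paper applies the inductive hypothesis to $Du\in H_0^m$ (obtaining $\|D^2u\|\le\|D^{m+1}u\|^{1/m}\|Du\|^{1-1/m}$) and combines with the $j=1$ step, whereas you describe it as chaining the full family $\|D^ju\|^2\le\|D^{j+1}u\|\,\|D^{j-1}u\|$, $1\le j\le l-1$; both yield the same exponents with the same constant.
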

\begin{proof}
	Let $l=1$, then for any $x\in (0,L)$
	\begin{align*}
		u^2(x) & =  \int_{0}^{x}D[u^2(\xi)]d\xi\leq 2\int_{0}^{x}|u(\xi)||D(\xi)|d\xi\\
		& \leq  2\int_0^L |u||Du|dx\leq 2\|u\|\|Du\|
	\end{align*}
and $\|u\|_{\infty}\leq \sqrt{2}\|Du\|^{\frac{1}{2}}\|u\|^{\frac{1}{2}}$.\\
	For $l\geq 2$ the proof will be done in several steps:\\	
	{\bf\underline{Step 1}:} $\|Du\|\leq \|D^2u\|^{\frac{1}{2}}\|u\|^{\frac{1}{2}}$, for all $u\in H_0^2(0,L)$.\\
	Let $u\in H_0^2(0,L)$, then 
	$$\|Du\|^2=\int_{0}^{L}(Du)^2dx=-\int_{0}^{L}uD^2udx\leq \int_{0}^{L}|u||D^2u|dx\leq \|u\|\|D^2u\|,$$
	and Step 1 is proved.\\
	{\bf\underline{Step 2}:} Let $u\in H_0^l(0,L)$, then $\|Du\|\leq \|D^lu\|^{\frac{1}{l}}\|u\|^{1-\frac{1}{l}}$, for all $l\geq 2$.\\
	We proceed by induction on $l$. The case $l=2$ is proved in Step 1. Suppose the result is valid for $m>2$, then if $u\in H_0^{m+1}(0,L)$, we have $u\in H_0^2(0,L)$ and $Du\in H_0^m(0,L)$. Consequently,
	\begin{equation}\label{2.5}
	\|Du\|\leq \|D^2u\|^{\frac{1}{2}}\|u\|^{\frac{1}{2}}
	\end{equation}
	and
	\begin{equation}\label{2.6}
	\|D^2u\|\leq \|D^{m+1}u\|^{\frac{1}{m}}\|Du\|^{1-\frac{1}{m}}.
	\end{equation}
	Substituting \eqref{2.6} into \eqref{2.5}, we obtain
	\begin{equation*}
	\|Du\|\leq \|D^{m+1}u\|^{\frac{1}{m+1}}\|Du\|^{1-\frac{1}{m+1}}.
	\end{equation*}
	Therefore $\|Du\|\leq \|D^lu\|^{\frac{1}{l}}\|u\|^{1-\frac{1}{l}}$  for all $l\geq 2$.\\
	{\bf\underline{Step 3}:} $\|u\|_{\infty}\leq\sqrt{2} \|D^lu\|^{\frac{1}{2l}}\|u\|^{1-\frac{1}{2l}}$, for all $u\in H_0^l(0,L)$ and $l\geq 2$.\\
	Let $l\geq 2$ and $u\in H_0^l(0,L)$. Then by the case  $l=1$ and Step 2,
	\begin{align*}
	\|u\|_{\infty} 
	\leq  \sqrt{2} \|D^lu\|^{\frac{1}{2l}}\|u\|^{\left(1-\frac{1}{l}\right)\frac{1}{2}}\|u\|^{\frac{1}{2}}
	=  \sqrt{2} \|D^lu\|^{\frac{1}{2l}}\|u\|^{1-\frac{1}{2l}}.
	\end{align*}
The proof of Lemma 2.1 is complete.
\end{proof}	
\begin{lem}\label{*}(See \cite{niren}, p. 125).
	Suppose $u$ and $D^mu$, $m\in\mathbb{N}$ belong to $L^2(0,L)$. Then for the derivatives $D^iu$, $0\leq i<m$, the following inequality holds: 
	\begin{equation}\label{2.7}
	\|D^iu\|\leq A_1\|D^mu\|^{\frac{i}{m}}\|u\|^{1-\frac{i}{m}}+A_2\|u\|,
	\end{equation}
where $A_1$, $A_2$ are constants depending only on $L$, $m$, $i$.
	\end{lem}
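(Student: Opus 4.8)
The plan is to establish the interpolation inequality \eqref{2.7} by combining the boundary-free Gagliardo--Nirenberg-type estimate of Lemma \ref{thm3.1} with an auxiliary reduction that removes the homogeneous boundary hypothesis. Since Lemma \ref{thm3.1} was proved only for $u\in H_0^l$, the first task is to reduce the general case (where $u$ and $D^mu$ lie in $L^2$ but no boundary conditions are imposed) to that setting. I would do this by the standard cutoff/extension argument: multiply $u$ by a fixed smooth function that vanishes near one endpoint and equals $1$ on a large subinterval, handle the commutator terms coming from the Leibniz rule, and treat the remaining piece near the endpoint by a reflection; each operation produces only constants depending on $L$, $m$, $i$, which is exactly the form of $A_1$, $A_2$ allowed in the statement.

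The core estimate is the interior one, for which I would first prove the special case $A_2=0$ under the extra assumption that $u$ (and its relevant derivatives) vanish at the boundary, using the iteration already carried out in Step 1 and Step 2 of the previous proof. Concretely, integration by parts gives $\|D^i u\|^2 = -(D^{i-1}u, D^{i+1}u)\le \|D^{i-1}u\|\,\|D^{i+1}u\|$, so that $\|D^i u\|$ is log-convex in $i$; chaining this from $i=0$ to $i=m$ yields $\|D^i u\|\le \|D^m u\|^{i/m}\|u\|^{1-i/m}$ for all $0\le i\le m$. This is the model inequality, and everything else is perturbation of it.

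To pass from this model inequality to \eqref{2.7} for general $u$, I would combine the cutoff reduction with a further interpolation trick to absorb the lower-order commutator contributions. The commutators produced by differentiating $\varphi u$ involve derivatives $D^j u$ with $j<i$ times bounded factors; applying the already-established model inequality to those terms and then using Young's inequality with a small parameter $\varepsilon$ lets one absorb the genuinely high-order part back into $A_1\|D^m u\|^{i/m}\|u\|^{1-i/m}$ while the leftover collapses into $A_2\|u\|$. Alternatively one can quote the classical Gagliardo--Nirenberg--Ehrling inequality on the interval directly; but since the paper only cites \cite{niren}, the cleanest route is to reproduce the cutoff-plus-log-convexity argument and keep track of the $L$-dependence of the constants.

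The main obstacle is precisely the bookkeeping near the endpoints: the log-convexity argument is clean only when the boundary terms in the integrations by parts vanish, and for general $u\in L^2$ with $D^m u\in L^2$ there is no reason for this. Handling this honestly requires either the extension/cutoff machinery (with careful control that all constants depend only on $L$, $m$, $i$ and not on $u$) or a direct appeal to \cite{niren}; I expect the bulk of the work, and the only real subtlety, to be in making that reduction rigorous while preserving the stated form of the inequality with the additive term $A_2\|u\|$.
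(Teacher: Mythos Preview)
The paper does not supply its own proof of this lemma: the statement is given with the citation ``(See \cite{niren}, p.~125)'' and no argument follows. There is therefore nothing in the paper to compare your proposal against; the authors simply quote Nirenberg's classical interpolation inequality as an external result.

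That said, your outline is a reasonable sketch of one standard route to the inequality. The log-convexity step $\|D^i u\|^2 \le \|D^{i-1}u\|\,\|D^{i+1}u\|$ via integration by parts is exactly the mechanism behind the $H_0^l$ case you already used in Lemma~\ref{thm3.1}, and the cutoff-plus-commutator bookkeeping is the accepted way to remove the boundary hypothesis while generating the additive $A_2\|u\|$ term. One small caution: when you write ``multiply by a cutoff vanishing near one endpoint and handle the other by reflection,'' you should be explicit that the reflected function lies in $H^m$ of the doubled interval (even reflection preserves $u\in L^2$ and $D^m u\in L^2$ only up to distributional terms at the reflection point unless you reflect appropriately), or else replace this by localizing with two cutoffs supported away from each endpoint and covering $(0,L)$. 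Either way the argument can be made to work with constants depending only on $L$, $m$, $i$, consistent with the stated form of \eqref{2.7}.
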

\section{Formulation of the problem }
Let $T$ and $L$ be real positive numbers and $Q_T=\{(t,x)\in\mathbb{R}^2: t\in(0,T),\,\, x\in(0,L)\}$. Consider the following higher-order dispersive equation:
\begin{equation}\label{3.1}
u_t+\sum_{j=1}^{l}(-1)^{j+1}D^{2j+1}u+u^kDu=0 \,\,\, \mbox{in}\,\,\, Q_T
\end{equation}
subject to initial-boundary conditions:
\begin{equation} \label{3.2}
u(0,x)=u_0(x), \,\, x\in(0,L); \\
\end{equation}
\begin{equation} \label{3.3}
D^{i}u(t,0)=D^{i}u(t,L)=D^{l}u(t,L)=0,\,\, i=0,\ldots, l-1, \\
\end{equation}
where $l,k\in \mathbb{N}$ and $u_0$ is a given function.

\section{Local solutions}
We start with the linearized version of \eqref{3.1}
\begin{equation}\label{4.1}
u_t+\sum_{j=1}^{l}(-1)^{j+1}D^{2j+1}u=f\,\,\, \mbox{in}\,\,\, Q_T
\end{equation}
subject to initial-boundary conditions \eqref{3.2}-\eqref{3.3}. Define the linear differential operator in $L^2(0,L)$:
$$Au\equiv\sum_{j=1}^{l}(-1)^{j+1}D^{2j+1}u,$$
$$D(A)\equiv \{u\in H_0^l(0,L)\cap H^{2l+1}(0,L): D^{l}u(L)=0\}.$$
\begin{teo}\label{thm4.1}(See \cite{larluch}, Theorem 4.1.) Let $g\in L^2(0,L)$. Then for all $a>0$ the stationary equation: $au+Au=g$ in $(0,L)$, subject to boundary conditions: $D^{i}u(0)=D^{i}u(L)=D^{l}u(L)=0,\,\, i=0,\ldots, l-1$ admits a unique regular solution $u\in H^{2l+1}(0,L)$ such that
	\begin{equation}\label{4.2}
		\|u\|_{H^{2l+1}}\leq C \|g\|
	\end{equation}
	with the constant $C$ depending only on $L$, $l$ and $a$.
\end{teo}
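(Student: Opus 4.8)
The plan is to prove uniqueness and the bound \eqref{4.2} from an energy identity, and to obtain existence from the Fredholm theory of the underlying linear boundary-value ODE (a Galerkin scheme would serve equally well).

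First I would take the $L^{2}(0,L)$ scalar product of $au+Au=g$ with $u$. The crux is the value of $(D^{2j+1}u,u)$ for $1\le j\le l$: integrating by parts $j$ times gives
$$(D^{2j+1}u,u)=\sum_{m=0}^{j-1}(-1)^{m}\big[(D^{2j-m}u)(D^{m}u)\big]_{0}^{L}+\frac{(-1)^{j}}{2}\big[(D^{j}u)^{2}\big]_{0}^{L}.$$
Since $m\le j-1\le l-1$, each factor $D^{m}u$ vanishes at $0$ and $L$ by \eqref{3.3}, so the whole sum drops; the last bracket vanishes too when $j\le l-1$, while for $j=l$ only $(D^{l}u(0))^{2}$ survives because $D^{l}u(L)=0$. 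Hence $(Au,u)=(-1)^{l+1}(D^{2l+1}u,u)=\frac12(D^{l}u(0))^{2}\ge 0$, so $a\|u\|^{2}\le (g,u)\le\|g\|\,\|u\|$, giving $\|u\|\le a^{-1}\|g\|$; uniqueness follows by applying this to the difference of two solutions ($g=0$).

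Next I would upgrade this to the $H^{2l+1}$ estimate. Isolating the top-order term in the equation, $D^{2l+1}u=(-1)^{l+1}\big(g-au-\sum_{j=1}^{l-1}(-1)^{j+1}D^{2j+1}u\big)$, so $\|D^{2l+1}u\|\le\|g\|+a\|u\|+\sum_{j=1}^{l-1}\|D^{2j+1}u\|$. For each $1\le j\le l-1$ I would apply \eqref{2.7} with $m=2l+1$, $i=2j+1$, followed by Young's inequality, to get $\|D^{2j+1}u\|\le\varepsilon\|D^{2l+1}u\|+C_{\varepsilon}\|u\|$; summing over $j$ and taking $\varepsilon$ small absorbs the highest-order term into the left side, and combined with $\|u\|\le a^{-1}\|g\|$ this yields $\|D^{2l+1}u\|\le C\|g\|$. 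One further application of \eqref{2.7} to the intermediate derivatives gives $\|u\|_{H^{2l+1}}\le C\|g\|$ with $C=C(L,l,a)$, i.e. \eqref{4.2}.

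For existence, observe that $au+Au=g$ is a linear ODE of order $2l+1$: a fundamental system $\phi_{1},\dots,\phi_{2l+1}$ of the homogeneous equation is built from the roots of $a+\sum_{j=1}^{l}(-1)^{j+1}\lambda^{2j+1}=0$, and variation of parameters produces a particular solution $u_{*}$, which lies in $H^{2l+1}(0,L)$ for $g\in L^{2}(0,L)$ since $D^{2l+1}u_{*}$ is expressed through $g$ and lower-order (locally absolutely continuous) terms exactly as above. Writing $u=u_{*}+\sum_{k}c_{k}\phi_{k}$ and imposing the $2l+1$ conditions \eqref{3.3} yields a square linear system for $(c_{k})$; its matrix is nonsingular, for a nontrivial kernel would provide a nonzero homogeneous solution satisfying all of \eqref{3.3}, contradicting the uniqueness just shown. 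This delivers the required $u\in H^{2l+1}(0,L)$, and \eqref{4.2} has already been established. Alternatively one may run a Galerkin approximation in a basis adapted to $D(A)$, using the positivity $(Au,u)\ge 0$ both for the solvability of the finite-dimensional systems and for uniform bounds, and then pass to the limit.

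The step I expect to be the main obstacle is the integration-by-parts bookkeeping in the energy identity: tracking every boundary contribution and the sign $(-1)^{l+1}$, so as to see that the non-standard condition $D^{l}u(L)=0$ is precisely what makes $(Au,u)$ nonnegative — with any other choice at $x=L$ one would instead pick up the unfavorable term $-\frac12(D^{l}u(L))^{2}$. In the Galerkin variant the same difficulty resurfaces as the need to choose an approximating basis compatible with all $2l+1$ boundary conditions so that this positivity survives the truncation; the ODE/Fredholm route avoids it by reducing existence directly to the uniqueness already in hand.
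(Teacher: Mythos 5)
Your argument is correct, but note that the paper does not actually prove Theorem \ref{thm4.1}: it imports it from \cite{larluch}, so there is no in-text proof to match. What you supply is a self-contained alternative. Its first ingredient, the identity $(Au,u)=\tfrac12\,(D^lu(0))^2\ge 0$ obtained from the boundary conditions \eqref{3.3} (your bookkeeping of the boundary terms, including the sign $(-1)^{l+1}$ and the surviving term at $x=0$ for $j=l$, checks out), is exactly the positivity the paper itself invokes in the proof of Theorem \ref{thm4.2}, so on that point you and the authors coincide. Where you genuinely diverge is in how existence and the full estimate \eqref{4.2} are produced: you upgrade $\|u\|\le a^{-1}\|g\|$ to the $H^{2l+1}$ bound by isolating $D^{2l+1}u$ from the equation and absorbing the lower odd-order derivatives via \eqref{2.7} and Young's inequality (legitimate, since the solution is assumed in $H^{2l+1}$ so both norms entering \eqref{2.7} are finite; for $l=1$ the absorption step is vacuous), and you obtain existence by treating $au+Au=g$ as a constant-coefficient ODE: variation of parameters gives a particular $H^{2l+1}$ solution, and the $(2l+1)\times(2l+1)$ system for the constants is nonsingular because a kernel element would be a nonzero smooth homogeneous solution satisfying all of \eqref{3.3}, contradicting the uniqueness already proved --- in effect the Fredholm alternative for two-point boundary value problems. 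This route is more elementary and completely explicit, at the price of using the one-dimensional ODE structure (and you should allow for repeated characteristic roots, which only changes the form of the fundamental system); the citation to \cite{larluch}, and your own Galerkin remark, point to arguments that generalize more readily to settings where an explicit fundamental system is unavailable. The whole package (uniqueness, estimate with $C=C(L,l,a)$, existence) is sound.
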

\begin{teo}\label{thm4.2} Let $u_0\in D(A)$ and $f\in H^1(0,T;L^2(0,L))$. Then for every $T>0$, problem \eqref{4.1},\eqref{3.2},\eqref{3.3} has a unique solution $u=u(t,x)$ such that
	$$u\in C([0,T];D(A))\cap C^1([0,T];L^2(0,L)).$$  
\end{teo}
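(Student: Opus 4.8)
The plan is to prove Theorem \ref{thm4.2} by the standard semigroup/Hille--Yosida approach applied to the operator $A$ with domain $D(A)$. First I would establish that $-A$ generates a $C_0$-semigroup of contractions (or at least a $C_0$-semigroup) on $L^2(0,L)$. The resolvent estimate needed for this is precisely what Theorem \ref{thm4.1} provides: for every $a>0$ and $g\in L^2(0,L)$ the equation $au+Au=g$ has a unique solution $u\in D(A)$ with $\|u\|_{H^{2l+1}}\le C\|g\|$, hence in particular $\|u\|\le C_a\|g\|$, which says $(aI+A)^{-1}$ exists and is bounded on $L^2$ with the range being all of $L^2$; thus $a$ is in the resolvent set of $-A$ for all $a>0$ and $(aI+A)$ is surjective with bounded inverse.

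The second key step is to check dissipativity: I would show that $(Au,u)\ge 0$ (equivalently $\mathrm{Re}\,(-Au,u)\le 0$) for $u\in D(A)$, using integration by parts together with the boundary conditions $D^iu(0)=D^iu(L)=0$ for $i=0,\dots,l-1$ and $D^lu(L)=0$. The typical computation gives $(D^{2j+1}u,u)=(-1)^{j}\tfrac12[(D^ju)^2]_0^L$ plus lower-order boundary terms that all vanish, so that $\sum_{j=1}^l(-1)^{j+1}(D^{2j+1}u,u)$ reduces to a single boundary term of the form $\tfrac12 (D^lu(0))^2\ge 0$ (the signs working out so that the surviving term is nonnegative). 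Combined with surjectivity of $aI+A$ this gives, by the Lumer--Phillips theorem, that $-A$ generates a $C_0$-semigroup of contractions $\{S(t)\}_{t\ge0}$ on $L^2(0,L)$.

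With the semigroup in hand, the abstract inhomogeneous Cauchy problem $u_t+Au=f$, $u(0)=u_0$, has the mild solution $u(t)=S(t)u_0+\int_0^t S(t-s)f(s)\,ds$. The regularity claim $u\in C([0,T];D(A))\cap C^1([0,T];L^2)$ then follows from the standard regularity theorem for inhomogeneous evolution equations: it suffices that $u_0\in D(A)$ and that $f$ is sufficiently smooth in time, and the hypothesis $f\in H^1(0,T;L^2(0,L))$ (hence $f\in C([0,T];L^2)$ with $f_t\in L^2(0,T;L^2)$, or one may even use $W^{1,1}(0,T;L^2)$) is exactly the condition that upgrades the mild solution to a strong/classical solution taking values in $D(A)$, with $u_t=-Au+f$ holding in $L^2$ for every $t$. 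Uniqueness is immediate from the semigroup representation (or from an energy estimate: if $u$ solves the homogeneous problem with $u_0=0$, then $\tfrac{d}{dt}\|u\|^2=-2(Au,u)\le 0$, forcing $u\equiv0$).

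I expect the main obstacle to be the careful bookkeeping of boundary terms in the dissipativity computation: for general $l$ one must integrate $D^{2j+1}u$ against $u$ repeatedly, and verify that after using all $2l+1$ boundary conditions in \eqref{3.3} the alternating sum collapses to a manifestly nonnegative (or at least sign-definite) boundary contribution, with no uncontrolled interior terms surviving. A secondary technical point is confirming that $D(A)$ as defined is dense in $L^2(0,L)$ and that $A$ is closed — density is clear since $\mathcal{D}(0,L)\subset D(A)$, and closedness follows from the a priori estimate \eqref{4.2}. Everything else is a routine invocation of the Hille--Yosida/Lumer--Phillips machinery and the standard inhomogeneous regularity result (e.g. Pazy), so no further delicate analysis should be required.
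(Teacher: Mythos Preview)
Your proposal is correct and follows essentially the same approach as the paper: surjectivity of $aI+A$ from Theorem~\ref{thm4.1}, dissipativity via integration by parts yielding $(Au,u)=\tfrac12(D^lu(0))^2\ge 0$, and then an appeal to semigroup theory for the inhomogeneous regularity. The paper's proof is simply a terser version of yours, citing \cite{zheng} (Lemma~2.2.3 and Corollary~2.4.2) in place of your more explicit invocation of Lumer--Phillips and the Pazy-type regularity theorem.
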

\begin{proof}
According to Theorem 4.1, the operator $aI+A$ is surjective for all $a>0$. Moreover, if $u\in D(A)$ then
$$(Au,u)=\frac{1}{2}(D^lu(0))^2\geq 0$$ 
and the result follows by the semigroup theory. (See \cite{zheng}, Lemma 2.2.3 and Corollary 2.4.2.)
\end{proof}	
\begin{teo}\label{thm4.3} Let $u_0\in D(A)$. Then there exists a real $T_*\in(0,T]$ such that  \eqref{3.1}-\eqref{3.3} has a unique regular solution $u=u(t,x)$:
	$$u\in L^{\infty}(0,T_*; H_0^l(0,L)\cap H^{2l+1}(0,L))\cap L^2(0,T_*; H^{(2l+1)+l}(0,L)),$$
	$$u_t\in L^{\infty}(0,T_*; L^2(0,L))\cap L^2(0,T_*; H_0^l(0,L)).$$
\end{teo}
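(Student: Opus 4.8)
The plan is to construct the solution by a parabolic-type regularization or, more naturally here, by the Faedo--Galerkin method combined with the linear theory of Theorem~\ref{thm4.2}. Concretely, I would first set up an iteration scheme: given $u^{(n)}$, let $u^{(n+1)}$ solve the linear problem \eqref{4.1},\eqref{3.2},\eqref{3.3} with right-hand side $f = -\,(u^{(n)})^k D u^{(n)}$, which by Theorem~\ref{thm4.2} is well-posed provided the nonlinear term lies in $H^1(0,T;L^2)$; this holds as long as $u^{(n)}\in C([0,T];D(A))$ and $u^{(n)}_t$ has enough regularity, so the scheme is consistent. Alternatively, and more cleanly for the a priori estimates, one performs the Galerkin approximation in the basis of eigenfunctions of $A$ (or of an auxiliary self-adjoint operator compatible with \eqref{3.3}) and derives uniform-in-$n$ bounds on a time interval $[0,T_*]$ whose length depends only on $\|u_0\|_{D(A)}$.

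The heart of the argument is the a priori estimates, carried out in a nested hierarchy. First, the $L^2$ estimate: multiply \eqref{3.1} by $u$ and integrate over $(0,L)$; the dispersive sum contributes a boundary term controlled by \eqref{3.3} (as in the computation $(Au,u)=\tfrac12(D^lu(0))^2$ in Theorem~\ref{thm4.2}), and the convective term $\int_0^L u^{k+1}D u\,dx = -\frac{1}{k+2}\int_0^L D(u^{k+2})\cdot 1\,dx$ vanishes after integration by parts using the boundary conditions, so $\|u(t)\|$ is controlled. Next, the $H^l_0$ estimate: multiply by $D^{2l}u$ (or apply $D^l$ and test appropriately), integrate by parts repeatedly, and bound the nonlinear contribution $\int_0^L D^{2l}u\cdot u^k D u\,dx$ using the Gagliardo--Nirenberg inequality of Lemma~\ref{thm3.1} and the interpolation Lemma~\ref{*} to absorb intermediate derivatives; this yields a differential inequality of the form $\frac{d}{dt}\|D^l u\|^2 \le C\big(1+\|D^l u\|^2\big)^{p}$ for some power $p$ depending on $k$, together with a gain of one half-derivative (the $L^2(0,T_*;H^{(2l+1)+l})$ term) coming from the Kato-type smoothing inherent in the dispersive operator on a bounded interval. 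Then one differentiates the equation in $t$, sets $v=u_t$, and runs the analogous $L^2$ and $H^l_0$ estimates for $v$ to get the stated regularity $u_t\in L^\infty(0,T_*;L^2)\cap L^2(0,T_*;H^l_0)$; here the initial datum $v(0) = u_t(0) = -Au_0 - u_0^k Du_0 \in L^2$ is well-defined precisely because $u_0\in D(A)$.

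Once the uniform bounds are in hand, passage to the limit is standard: extract a weakly-$*$ convergent subsequence, use Aubin--Lions compactness to upgrade to strong convergence in $L^2(0,T_*;H^{2l}_{\mathrm{loc}})$ or similar, which suffices to pass to the limit in the nonlinear term $u^k Du$, and verify that the limit satisfies \eqref{3.1}--\eqref{3.3} in the appropriate sense, with the boundary conditions inherited from the function spaces. Uniqueness follows by taking the difference $w=u_1-u_2$ of two solutions, testing with $w$, writing $u_1^k D u_1 - u_2^k D u_2$ as a sum of terms each containing a factor of $w$ or $Dw$, estimating via $\|w\|_\infty \le \sqrt 2 \|D^l w\|^{1/2l}\|w\|^{1-1/2l}$ from Lemma~\ref{thm3.1} and Gronwall's inequality, giving $w\equiv 0$ on $[0,T_*]$.

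The main obstacle I expect is the $H^l_0$ a priori estimate: controlling the top-order nonlinear interaction $\int_0^L D^{2l}u\cdot u^k Du\,dx$ uniformly requires careful integration by parts to move derivatives off the most singular factor and then a delicate application of Gagliardo--Nirenberg/Nirenberg interpolation so that all intermediate-order terms are either absorbed into the dissipative/smoothing term or folded into a locally-in-time Gronwall argument; it is this estimate that forces the time interval to shrink to $T_*$ and that dictates the admissible powers of $\|D^l u\|$, and it must be done in a way that is uniform along the Galerkin (or iteration) approximations. A secondary technical point is justifying the boundary integrations by parts at the Galerkin level, which is why choosing an approximation scheme whose basis respects the boundary conditions in \eqref{3.3} is important.
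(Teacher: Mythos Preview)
Your first-mentioned scheme (iteration via the linear solver of Theorem~\ref{thm4.2}) is exactly what the paper does: it sets up the Banach fixed point argument for the map $P:v\mapsto u$ with $f=-v^kDv$, working in the space $V=\{v:\ v,v_t\in L^\infty(0,T;L^2)\cap L^2(0,T;H^l_0),\ v(0)=u_0\}$. But the a priori estimates are organized quite differently from your plan. The paper never multiplies by $D^{2l}u$; instead it uses the weighted multipliers $(1+x)u$ and $(1+x)u_t$, which produce the Kato smoothing term $\sum_j(2j+1)\|D^ju\|^2$ directly and leave only the nonnegative boundary term $(D^lu(t,0))^2$. This closes the contraction argument purely at the level of $u,u_t\in L^\infty L^2\cap L^2 H^l_0$. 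The higher regularity $u\in L^\infty H^{2l+1}$ is then obtained \emph{a posteriori} by rewriting the equation as $(I+A)u=u-u_t-u^kDu$ and invoking the elliptic estimate \eqref{4.2}; the further gain $u\in L^2 H^{3l+1}$ comes from differentiating in $x$ and using Lemma~\ref{*}.

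The practical upshot is that the ``main obstacle'' you anticipate---controlling $\int_0^L D^{2l}u\cdot u^kDu\,dx$---is completely sidestepped in the paper's route. Your proposed multiplier $D^{2l}u$ would in fact run into trouble: the boundary conditions \eqref{3.3} do not kill the boundary terms arising from $(D^{2l}u,D^{2j+1}u)$ (note $D^lu(t,0)$ is not prescribed), and for a Galerkin approximation the asymmetry of \eqref{3.3} means there is no natural self-adjoint operator whose eigenbasis respects all of them---a concern you flagged yourself. So while a Galerkin/compactness route might be made to work with the right multipliers, the paper's contraction-plus-elliptic-bootstrap approach avoids these difficulties entirely and is shorter.
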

\begin{proof}
The proof will be done using the Banach fixed point theorem.
First, let $v,v_t\in X= L^{\infty}(0,T;L^2(0,L))\cap L^2(0,T;H_0^l(0,L))$, then  $v^kDv\in H^1(0,T; L^2(0,L))$. Indeed, since $v,v_t\in L^2(0,T;H_0^l(0,L))$, we have $v\in C([0,T];H_0^l(0,L))$ and by \eqref{2.4}:
\begin{align*}
\|v^kDv\|_{L^2(Q_T)}^2 &\leq  \int_{0}^{T}\|v\|_{\infty}^{2k}(t)\|Dv\|^2(t)dt
  \leq \int_{0}^{T}2^k\|v\|_{H_0^l}^{2k}(t)\|v\|_{H_0^l}^2(t)dt\\
  &\leq 2^k\|v\|_{C([0,T];H_0^l(0,L))}^{2k}\|v\|_{L^2(0,T;H_0^l(0,L))}^2.	
\end{align*}
On the other hand, $(v^kDv)_t=kv^{k-1}v_tDv+v^kDv_t$. Hence
$$\|(v^kDv)_t\|_{L^2(Q_T)}^2\leq \underbrace{2k^2\|v^{k-1}v_tDv\|_{L^2(Q_T)}^2}_{I_1}+\underbrace{2\|v^kDv_t\|_{L^2(Q_T)}^2}_{I_2}.$$
We estimate
\begin{align*}
I_1 & \leq  2k^2\int_{0}^{T}\left[\|v\|_{\infty}^{2(k-1)}(t)\|v_t\|_{\infty}^2(t)\|Dv\|^2(t)\right]dt\\
 & \leq  2k^2 \int_{0}^{T}\left[2^{k-1}\|v\|_{H_0^l}^{2(k-1)}(t)2\|v_t\|_{H_0^l}^2(t)\|v\|_{H_0^l}^2(t)\right]dt\\
 & \leq  2^{k+1}k^2\|v\|_{C([0,T];H_0^l(0,L))}^{2k}\|v_t\|_{L^2(0,T;H_0^l(0,L))}^2,
\end{align*}
\begin{align*}
	I_2  \leq 2\int_{0}^{T}\|v\|_{\infty}^{2k}(t)\|Dv_t\|^2(t)dt
	 \leq  2^{k+1}\|v\|_{C([0,T];H_0^l(0,L))}^{2k}\|v_t\|_{L^2(0,T;H_0^l(0,L))}^2.
\end{align*}
Taking $f=-v^kDv$ in Theorem 4.2, define an operator $P$, related to \eqref{4.1},\eqref{3.2},\eqref{3.3} such that $v\mapsto u=Pv$, and the space:
$$V=\{v(t,x):v,v_t\in X;\,\, v(0,\cdot)\equiv u_0\}$$
with the norm
$$\|v\|_V^2=\esssup_{t\in(0, T)}\{\|v\|^2(t)+\|v_t\|^2(t)\}+\int_{0}^{T}\sum_{j=1}^{l}[\|D^iv\|^2(t)+\|D^iv_t\|^2(t)]dt.$$
Consider in $V$ a ball
$$B_R=\{v\in V: \|v\|_V^2\leq 8R^2\},$$
where $R>0$ satisfies the inequality:
\begin{equation}\label{4.3}
(1+L)(1+l)\left(2^k\|u_0\|_{H_0^l}^{2(k+1)}+\|u_0\|_{H^{2l+1}}^2\right)\leq R^2.
\end{equation}
\begin{lem}\label{lem4.4} There is a real $0<T_0\leq T$ such that the operator $P$ maps $B_R$ into itself.	
\end{lem}
\begin{proof} First, if $v\in B_R$, then due to \eqref{4.3}, we have
\begin{equation}\label{4.4}
	\|Dv\|^2(t) \leq \|Du_0\|^2+\int_{0}^{T}[\|Dv\|^2(t)+\|Dv_t\|^2(t)]dt 
	 \leq  9R^2.	
\end{equation}	
We will need the following estimates:
\subsection*{Estimate 1} Multiplying \eqref{4.1} by $2(1+x)u$ and integrating over $(0,L)$, we obtain	
\begin{align}\label{4.5}
\frac{d}{dt}(1+x,u^2)(t)&+\sum_{j=1}^{l}(2j+1)\|D^ju\|^2(t)+(D^lu(t,0))^2\notag \\
&= -2(v^kDv,(1+x)u)(t).
\end{align}
Making use of \eqref{2.4} and \eqref{4.4}, we estimate
\begin{align*}
-2(v^kDv,(1+x)u)(t)  &\leq  (1+L)\|v^kDv\|^2(t)+(1+x,u^2)(t)\\
 &\leq (1+L)\|v\|_{\infty}^{2k}(t)\|Dv\|^2(t)+(1+x,u^2)(t)\\
 &\leq  (1+L)2^k\|v\|_{H_0^1}^{2k}(t)\|Dv\|^2(t)+(1+x,u^2)(t)\\
 & \leq (1+L) 9(34)^kR^{2k+2}+(1+x,u^2)(t).
\end{align*}
Then \eqref{4.5} becomes
\begin{align}\label{4.6}
	\frac{d}{dt}(1+x,u^2)(t)&+\sum_{j=1}^{l}(2j+1)\|D^ju\|^2(t)+(D^lu(t,0))^2\notag \\
& \leq (1+x,u^2)(t)+ (1+L) 9(34)^kR^{2k+2}.
\end{align}
By the Gronwall Lemma and \eqref{4.3}, 
\begin{align*}
(1+x,u^2)(t) &\leq  e^T\left((1+x,u_0^2)+(1+L)9(34)^kR^{2k+2}T\right)\\
  &\leq  e^T\left(\frac{R^2}{2}+(1+L)9(34)^kR^{2k+2}T\right).
\end{align*}
Choosing $0<T_1\leq T$ such that $e^{T_1}\leq 2$ and $(1+L)9(34)^kR^{2k}T_1\leq \frac{1}{2}$, we conclude
\begin{equation}\label{4.7}
(1+x,u^2)(t)\leq 2\left(\frac{R^2}{2}+\frac{R^2}{2}\right)=2R^2, \,\,\, t\in(0,T_1].
\end{equation}
Substituting \eqref{4.7} into \eqref{4.6} and integrating over $(0,T_1)$, we get 
\begin{equation*}
3\int_{0}^{T_1}\sum_{j=1}^{l}\|D^ju\|^2(t)dt\leq R^2\left((1+L)9(34)^kR^{2k}+2\right)T_1+(1+x,u_0^2).
\end{equation*}
Taking $0<T_2\leq T_1$ such that $\left((1+L)9(34)^kR^{2k}+2\right)T_2\leq 2$ and making use of \eqref{4.3}, we conclude 
\begin{equation}\label{4.8}
\int_{0}^{T_2}\sum_{j=1}^{l}\|D^ju\|^2(t)dt\leq\frac{2R^2+R^2}{3}=R^2.
\end{equation}
\subsection*{Estimate 2} Differentiating \eqref{4.1} with respect to $t$, multiplying the result by $2(1+x)u_t$ and integrating over $(0,L)$, one gets
\begin{align}\label{4.9}	\frac{d}{dt}(1+x,u_t^2)(t)+\sum_{j=1}^{l}(2j+1)\|D^ju_t\|^2(t)+(D^lu_t(t,0))^2\notag\\= \underbrace{2(-kv^{k-1}v_tDv,(1+x)u_t)(t)}_{I_1}+\underbrace{2(-v^kDv_t,(1+x)u_t)(t)}_{I_2}.
\end{align}
Making use of \eqref{2.4} and \eqref{4.4}, we estimate for an arbitrary $\epsilon>0$:
\begin{align*}
I_1 & \leq  \epsilon (1+L)k\|v^{k-1}v_tDv\|^2(t)+\frac{1}{\epsilon}(1+x,u_t^2)(t)\\
 & \leq \epsilon (1+L) k \|v\|_{\infty}^{2(k-1)}(t)\|v_t\|_{\infty}^2(t)\|Dv\|^2(t)+\frac{1}{\epsilon}(1+x,u_t^2)(t)\\
  & \leq  \epsilon (1+L) k 2^{k}\|v\|_{H_0^1}^{2(k-1)}(t)\|v_t\|_{H_0^1}^2(t)\|Dv\|^2(t)+\frac{1}{\epsilon}(1+x,u_t^2)(t)\\
  & \leq  \epsilon (1+L) k 2^k(17R^2)^{k-1}\left(8R^2+\|Dv_t\|^2(t)\right)9R^2+\frac{1}{\epsilon}(1+x,u_t^2)(t),\\
 \end{align*}
 \begin{align*}
	I_2 & \leq  \epsilon (1+L) \|v^kDv_t\|^2(t)+\frac{1}{\epsilon}(1+x,u_t^2)(t)
	 \leq  \epsilon (1+L)\|v\|_{\infty}^{2k}(t)\|Dv_t\|^2(t)\\&+\frac{1}{\epsilon}(1+x,u_t^2)(t)
	 \leq \epsilon (1+L)2^k\|v\|_{H_0^1}^{2k}(t)\|Dv_t\|^2(t)+\frac{1}{\epsilon}(1+x,u_t^2)(t)\\
	 &\leq  \epsilon (1+L)(34)^kR^{2k}\|Dv_t\|^2(t)+\frac{1}{\epsilon}(1+x,u_t^2)(t).
\end{align*}
Substituting $I_1$, $I_2$ into \eqref{4.9} and taking $\epsilon=\left(16(1+L)\alpha_kR^{2k}\right)^{-1}$ with $\alpha_k=9k2^k(17)^{k-1}+(34)^k$, we reduce it to the inequality
\begin{align}\label{4.10}
	&\frac{d}{dt}(1+x,u_t^2)(t)+\sum_{j=1}^{l}(2j+1)\|D^ju_t\|^2(t)+(D^lu_t(t,0))^2 \leq \frac{1}{16}\|Dv_t\|^2(t) \notag \\ & +9k(34)^{k-1}R^2\alpha_k^{-1} +(32)(1+L)\alpha_kR^{2k}(1+x,u_t^2)(t).
		\end{align}
By the Gronwall Lemma,
\begin{align*}
&(1+x,u_t^2)(t)  \leq  e^{(32)(1+L)\alpha_kR^{2k}T}(1+x,u_t^2)(0)\notag\\  &+  e^{(32)(1+L)\alpha_kR^{2k}T}\left(9k(34)^{k-1}R^2\alpha_k^{-1}T+\frac{1}{16}\int_{0}^{T}\|Dv_t\|^2(t)dt\right).
\end{align*}
Due to \eqref{2.4},
\begin{align*}
\|u_0^kDu_0\|^2 & \leq  \|u\|_{\infty}^{2k}\|Du_0\|^2 \leq 2^k\|u_0\|^{\left(1-\frac{1}{2l}\right)2k}\|D^lu_0\|^{\frac{k}{l}}\|Du_0\|^2\\
 & \leq  2^k\|u_0\|_{H_0^l}^{\left(1-\frac{1}{2l}\right)2k}\|u_0\|_{H_0^l}^{\frac{k}{l}}\|u_0\|_{H_0^l}^2=2^k\|u_0\|_{H_0^l}^{2(k+1)},
\end{align*}
whence \eqref{4.3} implies
\begin{align*}
	&(1+x,u_t^2)(0)  \leq  (1+L)(1+l)\left(\|u_0^kDu_0\|^2+\|u_0\|_{H^{2l+1}}^2\right) \\
	 &\leq  (1+L)(1+l)\left(2^k\|u_0\|_{H_0^l}^{2(k+1)}+\|u_0\|_{H^{2l+1}}^2\right) 
	 \leq  R^2.
\end{align*}
Taking $0<T_3\leq T$ such that  $e^{(32)(1+L)\alpha_kR^{2k}T_3}\leq 2$,  
 $9k(34)^{k-1}\alpha_k^{-1}T_3\leq \frac{1}{2}$, we get
\begin{equation}\label{4.11}
(1+x,u_t^2)(t)\leq 2\left(R^2+\frac{R^2}{2}+\frac{R^2}{2}\right)=4R^2, \,\,\, t\in(0,T_3].
\end{equation}	
Substituting \eqref{4.11} into \eqref{4.10} and integrating over $(0,T_3)$, we obtain
\begin{align*}
	3\int_{0}^{T_3}\sum_{j=1}^{l}\|D^ju_t\|^2(t)dt& \leq  R^2\left((128)(1+L)\alpha_kR^{2k}+9k(34)^{k-1}\alpha_k^{-1}\right)T_3\\
	 &+  \frac{1}{16}\int_{0}^{T_3}\|Dv_t\|^2(t)dt+(1+x,u_t^2)(0).
\end{align*}
Let $0<T_4\leq T_3$ satisfy $\left((128)(1+L)\alpha_kR^{2k}+9k(34)^{k-1}\alpha_k^{-1}\right)T_4\leq 2$, then
\begin{equation}\label{4.12}
\int_{0}^{T_4}\sum_{j=1}^{l}\|D^ju_t\|^2(t)dt\leq \frac{1}{3}\left(2R^2+\frac{R^2}{2}+\frac{R^2}{2}\right)=R^2.
\end{equation}
Choosing $T_0=\min\{T_2,T_4\}$ and taking into account \eqref{4.7}, \eqref{4.8}, \eqref{4.11}, \eqref{4.12}, we complete the proof of Lemma 4.4.
\end{proof}	
\begin{lem}\label{lem4.5} There is a real $0<T_*\leq T_0$ such that the mapping $P$ is a contraction in $B_R$.\end{lem}
\begin{proof}
	For $v_1,v_2\in B_R$, denote $u_i=Pv_i, \, i=1,2$, $w=v_1-v_2$ and $z=u_1-u_2$.
	Then $z$ satisfies the equation
\begin{equation}\label{4.13}
z_t+\sum_{j=1}^{l}(-1)^{j+1}D^{2j+1}z=-v_1^kDw-(v_1^k-v_2^k)Dv_2 \,\,\, \mbox{in}\,\,\, Q_{T_0}
\end{equation}
and homogeneous boundary conditions \eqref{3.2} and initial data $z(0,\cdot)\equiv 0$.	
\subsection*{Estimate 3} Multiplying \eqref{4.13} by $2(1+x)z$ and integrating over $(0,L)$, we obtain
\begin{align}\label{4.14}
	\frac{d}{dt}(1+x,z^2)(t)+\sum_{j=1}^{l}(2j+1)\|D^jz\|^2(t)+(D^lz(t,0))^2\notag\\
=\underbrace{2(-v_1^kDw,(1+x)z)(t)}_{I_1}+\underbrace{2(-(v_1^k-v_2^k)Dv_2,(1+x)z)(t)}_{I_2}.
\end{align}
Making use of \eqref{2.4} and \eqref{4.4}, we estimate for an arbitrary $\epsilon>0$:
\begin{align*}
	I_1 & \leq  \epsilon (1+L)\|v_1^kDw\|^2(t)+\frac{1}{\epsilon}(1+x,z^2)(t)\\
	 &\leq  \epsilon(1+L) (34)^kR^{2k}\|Dw\|^2(t)+\frac{1}{\epsilon}(1+x,z^2)(t).
\end{align*}
In order to estimate $I_2$, we need the following inequality: (See details in \cite{larluch}.)
\begin{equation}\label{tvm}
|v_1^k-v_2^k|\leq k2^{(k-1)}(|v_1|^{(k-1)}+|v_2|^{(k-1)})|w|.
\end{equation}
Then
\begin{align*}
	I_2  &\leq  \epsilon (1+L)\|(v_1^k-v_2^k)Dv_2\|^2(t)+\frac{1}{\epsilon}(1+x,z^2)(t)\\
	 &\leq \epsilon (1+L)k^22^{2k-1}\sum_{i=1}^{2} \|v_i\|_{\infty}^{2(k-1)}(t)(|w|^2,|Dv_2|^2)(t)+\frac{1}{\epsilon}(1+x,z^2)(t)\\&\leq \epsilon(1+L) k^2  2^{3k-1}(17)^{k-1}R^{2k-2}\|w\|_{\infty}^2(t)\|Dv_2\|^2(t)+\frac{1}{\epsilon}(1+x,z^2)(t)	\\ 
	 &\leq \epsilon(1+L) 9k^2  2^{3k}(17)^{k-1}R^{2k}\|w\|_{H_0^1}^2(t)+\frac{1}{\epsilon}(1+x,z^2)(t).
\end{align*}
Taking $\epsilon=(16(1+L)\beta_kR^{2k})^{-1}$ with $\beta_k=9k^22^{3k}(17)^{k-1}+(34)^k$, we write \eqref{4.14} as 
\begin{align}\label{4.16}
		\frac{d}{dt}(1+x,z^2)(t)+\sum_{j=1}^{l}(2j+1)\|D^jz\|^2(t)+(D^lz(t,0))^2\notag\\
		\leq (32)(1+L)\beta_kR^{2k}(1+x,z^2)(t)+\frac{1}{16}\|w\|_{H_0^1}^2(t).
\end{align}
By the Gronwall Lemma,
\begin{align*}
(1+x,z^2)(t)  \leq  e^{(32)(1+L)\beta_kR^{2k}T_0}\left((1+x,z^2)(0)+\frac{1}{16}\int_{0}^{T_0}\|w\|_{H_0^1}^2(t)dt\right).\\
	\end{align*}
Choosing $0<T_5\leq T_0$ such that $e^{(32)(1+L)\beta_kR^{2k}T_5}\leq 2$, we conclude 
\begin{equation}\label{4.17}
(1+x,z^2)(t)\leq \frac{1}{8}\|w\|_V^2, \,\,\, t\in(0,T_5].
\end{equation}
Substituting \eqref{4.17} into \eqref{4.16} and integrating over $(0,T_5)$, we find
\begin{equation*}
3\int_{0}^{T_5}\sum_{j=1}^{l}\|D^jz\|^2(t)dt\leq 4(1+L)\beta_kR^{2k}\|w\|_V^2T_5+\frac{1}{16}\|w\|_V^2.
\end{equation*}
Taking $0<T_6\leq T_5$ such that $4(1+L)\beta_kR^{2k}T_6\leq \frac{5}{16}$, we get
\begin{equation}\label{4.18}
	\int_{0}^{T_6}\sum_{j=1}^{l}\|D^jz\|^2(t)dt\leq \frac{1}{3}\left(\frac{5}{16}+\frac{1}{16}\right)\|w\|_V^2=\frac{1}{8}\|w\|_V^2.
\end{equation}

\subsection*{Estimate 4} 
Differentiating \eqref{4.13} with respect to $t$, multiplying the result by $2(1+x)z_t$ and integrating over $(0,L)$, we obtain
\begin{align}\label{4.19}
&	\frac{d}{dt}(1+x,z_t^2)(t)+\sum_{j=1}^{l}(2j+1)\|D^jz_t\|^2(t)+(D^lz_t(t,0))^2\notag\\&\leq\underbrace{2(-kv_1^{k-1}v_{1t}Dw,(1+x)z_t)(t)}_{I_1}+\underbrace{2(-v_1^kDw_t,(1+x)z_t)(t)}_{I_2}+\notag\\
& \underbrace{2(-kv_1^{k-1}w_tDv_2,(1+x)z_t)(t)}_{I_3}+\underbrace{2(-k(v_1^{k-1}-v_2^{k-1})v_{2t}Dv_2,(1+x)z_t)(t)}_{I_4}\notag\\&+\underbrace{2(-(v_1^k-v_2^k)Dv_{2t},(1+x)z_t)(t)}_{I_5}.
\end{align}
Making use of \eqref{2.4},\eqref{4.4},\eqref{tvm}, we estimate for an arbitrary $\epsilon>0$:
\begin{align*}
	I_1& 
	 \leq  \epsilon(1+L) k 2^{k+3}(17)^{k-1}R^{2k}\|Dw\|^2(t) +\frac{1}{\epsilon}(1+x,z_t^2)(t)\\&+  \epsilon (1+L) k2^k(17)^{k-1}R^{2k-2}\|Dv_{1t}\|^2(t)\|Dw\|^2(t),
\end{align*}
\begin{align*}
	&I_2  \leq  \epsilon(1+L) (34)^kR^{2k}\|Dw_t\|^2(t)+\frac{1}{\epsilon}(1+x,z_t^2)(t),
\end{align*}
\begin{align*}
	I_3 \leq  \epsilon(1+L) 9k 2^k(17)^{k-1}R^{2k}\|w_t\|_{H_0^1}^2(t)+\frac{1}{\epsilon}(1+x,z_t^2)(t),
\end{align*}
\begin{align*}
	I_4  	 &\leq \epsilon (1+L)9k(k-1)^22^{3k}(17)^{k-2}R^{2k}\|w\|_{H_0^1}^2(t)+\frac{1}{\epsilon}(1+x,z_t^2)(t)\\
	&+ \epsilon(1+L) 9k(k-1)^2 2^{3(k-1)}(17)^{k-2}R^{2k-2}\|Dv_{2t}\|^2(t)\|w\|_{H_0^1}^2(t),
\end{align*}
\begin{align*}
	I_5   \leq  \epsilon(1+L) k^2 2^{3k-1}(17)^{k-1}R^{2k-2}\|Dv_{2t}\|^2(t)\|w\|_{H_0^1}^2(t)+\frac{1}{\epsilon}(1+x,z_t^2)(t).
\end{align*}
Taking $\epsilon=(16(1+L)\gamma_k R^{2k})^{-1}$, where $$\gamma_k=k2^k(17)^{k-1}\left(25+18(k-1)^24^k(17)^{-1}+k^{-1}(17)+k2^{2k+2}\right),$$ \eqref{4.19} becomes
\begin{align}\label{4.20}
	&	\frac{d}{dt}(1+x,z_t^2)(t)+\sum_{j=1}^{l}(2j+1)\|D^jz_t\|^2(t)+(D^lz_t(t,0))^2\notag\\&\leq(16\gamma_k)^{-1}\left(k 2^{k+3}(17)^{k-1}+k2^k(17)^{k-1}R^{-2}\|Dv_{1t}\|^2(t)\right)\|Dw\|^2(t)\notag\\&+(16\gamma_k)^{-1}\left((34)^k+9k 2^k(17)^{k-1}\right)\|w_t\|_{H_0^1}^2(t)\notag\\&+(16\gamma_k)^{-1}9k(k-1)^22^{3k}(17)^{k-2}\|w\|_{H_0^1}^2(t)\notag\\&+(16\gamma_k)^{-1}9k(k-1)^2 2^{3(k-1)}(17)^{k-2}R^{-2}\|Dv_{2t}\|^2(t)\|w\|_{H_0^1}^2(t)\notag\\&+(16\gamma_k)^{-1}k^2 2^{3k-1}(17)^{k-1}R^{-2}\|Dv_{2t}\|^2(t)\|w\|_{H_0^1}^2(t)\notag\\&+5(16)(1+L)\gamma_kR^{2k}(1+x,z_t^2)(t).
\end{align}
By the Gronwall Lemma, 
$$(1+x,z_t^2)(t)\leq e^{5(16)(1+L)\gamma_kR^{2k}T_0}\left((16\gamma_k)^{-1}\gamma_k\right)\|w\|_V^2.$$
Choosing $0<T_7\leq T_0$ such that $e^{5(16)(1+L)\gamma_kR^{2k}T_7}\leq 2$, we conclude 
\begin{equation}\label{4.21}
(1+x,z_t^2)(t)\leq \frac{1}{8}\|w\|_V^2, \,\,\, t\in(0,T_7].
\end{equation}
Substituting \eqref{4.21} into \eqref{4.20} and integrating over $(0,T_7)$, we find
$$3\int_{0}^{T_7}\sum_{j=1}^{l}\|D^jz_t\|^2(t)dt\leq (10)(1+L)\gamma_kR^{2k}\|w\|_V^2T_7+\frac{1}{16}\|w\|_V^2.$$
Choosing  $0<T_8\leq T_7$ such that $(10)(1+L)\gamma_kR^{2k}T_8\leq \frac{5}{16}$, we obtain
\begin{equation}\label{4.22}
\int_{0}^{T_8}\sum_{j=1}^{l}\|D^jz_t\|^2(t)dt\leq\frac{1}{3}\left(\frac{5}{16}+\frac{1}{16}\right)\|w\|_V^2=\frac{1}{8}\|w\|_V^2.
\end{equation}
Taking $T_*=\min\{T_6,T_8\}$ and making use of \eqref{4.17},\eqref{4.18},\eqref{4.21},\eqref{4.22}, we find that $\|z\|_V^2\leq \frac{1}{2}\|w\|_V^2$. Hence $P$ is a contraction in $B_R$ and Lemma 4.5 is thereby proved.
\end{proof}	
According to Lemmas	4.3 and 4.4 and the contraction principle, problem \eqref{3.1}-\eqref{3.3} has a unique  generalized solution $u=u(t,x)$:
\begin{equation}\label{4.23}
u,u_t\in L^{\infty}(0,T_*; L^2(0,L))\cap L^2(0,T_*; H_0^l(0,L)).
\end{equation}
Write \eqref{3.1} in the form
\begin{equation}\label{4.24}
u+\sum_{j=1}^{l}(-1)^{j+1}D^{2j+1}u=u-u_t-u^kDu=F(t,x).
\end{equation}
Since $u,u_t$ satisfy \eqref{4.23}, it follows that $u^kDu\in H^1(0,T_*; L^2(0,L))$, for all $k\in \mathbb{N}$, hence $F\in L^{\infty}(0,T_*;L^2(0,L))$. Due to \eqref{4.2}, we get
\begin{equation}\label{4.25}
u\in L^{\infty}(0,T_*; H_0^l(0,L)\cap H^{2l+1}(0,L)).
\end{equation}
On the other hand, $u(t,\cdot)\in H^{2l+1}(0,L)$ implies $Du(t,\cdot)\in H^{2l}(0,L)\hookrightarrow H^l(0,L)$, for all $l\in \mathbb{N}$. Then, there exists a constant $K_*$ depending on $l$, $L$, such that: (See \cite{Adams}, Theorem 4.39.)
\begin{align*}
\int_{0}^{T_*}\|u^kDu\|_{H^l}^2(t)dt&\leq K_*\int_{0}^{T_*}\|u\|_{H^l}^{2k}(t)\|Du\|_{H^l}^2(t)dt\\
&\leq K_*T_*\|u\|_{L^{\infty}(0,T_*; H^{2l+1}(0,L))}^{2(k+1)}<+\infty,
\end{align*}
therefore
\begin{equation}\label{4.26}
u^kDu\in L^2(0,T_*; H^l(0,L)) \,\,\, \mbox{for all} \,\,\, k\in \mathbb{N}.
\end{equation}
	Returning to \eqref{3.1} and taking into account \eqref{4.23},\eqref{4.26}, we find
\begin{equation}\label{4.27}
\sum_{j=1}^{l}(-1)^{j+1}D^{2j+1}u=-u_t-u^kDu\in L^2(0,T_*; H^l(0,L)).
\end{equation}
Differentiating \eqref{4.27} $l$ times with respect to $x$, we obtain 
\begin{equation*}
\sum_{l<2j\leq 2l}(-1)^{j+1}D^{(2j+1)+l}u=-\sum_{2j\leq l}(-1)^{j+1}D^{(2j+1)+l}u-D^l[u_t+u^kDu].
\end{equation*}
We estimate
\begin{align}\label{4.28}
\|D^{(2l+1)+l}u\|(t)\leq\sum_{l<2j< 2l}\|D^{(2j+1)+l}u\|(t)\notag\\+\sum_{2j\leq l}\|D^{(2j+1)+l}u\|(t)+\|D^l[u_t+u^kDu]\|(t).
\end{align}
For $l=1$, we have $\sum_{l<2j< 2l}\|D^{(2j+1)+l}u\|(t)= 0$. For $l\geq 2$, due to \eqref{2.7}, there are constants $A_1^j$, $A_2^j$ ($l<2j<2l$) depending only on $L$, $l$, such that
$$
\|D^{(2j+1)+l}u\|(t)\leq A_1^j\|D^{(2l+1)+l}u\|^{\alpha^j}(t)\|u\|^{1-\alpha^j}(t)+A_2^j\|u\|(t)\, ,$$ where $\alpha^j=\frac{(2j+1)+l}{(2l+1)+l}$. Making use of the Young inequality with $p^j=\frac{1}{\alpha^j}$, $q^j=\frac{1}{1-\alpha^j}$ and arbitrary $\epsilon>0$, we get
$$
\|D^{(2j+1)+l}u\|(t)\leq \epsilon\|D^{(2l+1)+l}u\|(t)+C_j(\epsilon)\|u\|(t)+A_2^j\|u\|(t), 
$$
where $C_j(\epsilon)=\left[q^j\left(\frac{p^j\epsilon}{({A_1^j})^{p^j}}\right)^{\frac{q^j}{p^j}}
\right]^{-1}$. Summing over $l<2j<2l$, we find
\begin{align}\label{4.29}
\sum_{l<2j<2l}\|D^{(2j+1)+l}u\|(t)\leq l\epsilon\|D^{(2l+1)+l}u\|(t)\notag\\+\sum_{l<2j<2l}(C_j(\epsilon)+A_2^j)\|u\|(t).
\end{align}
Substituting \eqref{4.29} into \eqref{4.28} and taking $\epsilon=\frac{1}{2l}$, we get
\begin{equation}\label{4.30}
\|D^{(2l+1)+l}u\|^2(t)\leq C\left(\|u\|_{H^{2l+1}}^2(t)+\|u_t+u^kDu\|_{H^l}^2(t)\right),
\end{equation}
where $C$ is a constant depending only on $L$, $l$. By \eqref{4.23},\eqref{4.25},\eqref{4.26}, it follows that $D^{(2l+1)+l}u\in L^2(0,T_*;L^2(0,L))$. Again by \eqref{2.7}, we obtain for the intermediate derivatives $D^iu$, $i=(2l+1)+1, \ldots, (2l+1)+l-1$: 
\begin{equation}\label{4.31}
\|D^iu\|^2(t)\leq C_i\left(\|D^{(2l+1)+l}u\|^2(t)+\|u\|^2(t)\right),
\end{equation}
with the constants $C_i$ depending only on $L$, $l$.  Substituting \eqref{4.30} into \eqref{4.31} and taking into account \eqref{4.23},\eqref{4.25},\eqref{4.26}, we conclude 
\begin{equation}\label{4.32}
u\in L^2(0,T_*; H^{(2l+1)+l}(0,L)).
\end{equation}
Combining \eqref{4.23},\eqref{4.25},\eqref{4.32}, we complete the proof of Theorem 4.3.
\end{proof}

{\bf Conclusions.} Making use of the  Contraction principle, we obtain  local existence and uniqueness of a regular solution for all $k\in \mathbb{N}$. We must mention a smoothing effect, first proved in \cite{kato} for the KdV equation. Roughly speaking, we proved that if $u_0\in H^{2l+1}(0,L)$, then $u(t,\cdot)\in H^{(2l+1)+l}(0,L)$, $t>0$.


\begin{thebibliography}{99}

\bibitem{Adams}
\newblock R. Adams, J. Fournier,
\newblock Sobolev Spaces,
\newblock Second Edition (2003), Elsevier Science Ltd.


\bibitem{araruna} Araruna F. D., Capistrano-Filho R. A., Doronin G. G.: \emph{Energy decay for the modified Kawahara equation posed in a bounded domain.} J. Math. Anal. Appl.
385, 743--756 (2012).


\bibitem{biagioni} Biagioni H. A., Linares F: \emph{ On the Benney - Lin and Kawahara
	equations.} J. Math. Anal. Appl. 211,  131--152 (1997).


\bibitem{chile} J. Ceballos, M. Sepulveda, O. Villagran; \emph{ The
	Korteweg-de Vries- Kawahara equation in a bounded domain and some
	numerical results,} Appl. Math. Comput., 190 (2007) 912--936.


\bibitem{doronin2}  Doronin G. G., Larkin N. A.: \emph{
	Kawahara equation in a bounded domain.}
Discrete Contin. Dynamic Syst., Ser B 10 (2008), 783-799.


\bibitem{Fam1}
\newblock A.V. Faminskii, 
\newblock Cauchy problem for quasilinear equation of  odd order,
\newblock Mat. Sb., 180 (1989), 1183-1210. Transl. in Math. USSR-Sb., 68 (1991), 31-59. 1979.


\bibitem{familark} Faminskii A. V., Larkin N. A.: \emph{Initial-boundary value problems for quasilinear dispersive
	equations posed on a bounded interval.} Electron. J. Differ. Equations. 1--20 (2010).


\bibitem{farah} Farah L. G., Linares F., Pastor A.: \emph{The supercritical generalized KDV equation: global well-posedness in the energy space and below.} Math. Res. Lett. 18, no. 02, 357–377 (2011).

\bibitem{Fonseka1}
\newblock G. Fonseca, F. Linares , G. Ponce, 
\newblock Global well-posedness for the modified Korteweg-de Vries equation,
\newblock Comm. Part. Diff. Equats, 24 (3,4) (1999), 683-705.

\bibitem{Fonseka2}
\newblock G. Fonseca, F. Linares , G. Ponce, 
\newblock Global existence for the critical generalized  KDV equation,
\newblock Proc. of the AMS, vol. 131, Number 6 (2002), 1847-1855.

\bibitem{huo} Huo Z., Jia Y.: \emph{Well-posedness for the fifth-order shallow water equations.} Journal of Differential Equations. 246, 2448--2467 (2009).


\bibitem{linponce} P. Isaza, F. Linares and G. Ponce, {\it Decay properties for solutions of fifth order nonlinear dispersive equations,}  J Differ Equats. 258
(2015) 764--795.


\bibitem{jeffrey} Jeffrey A. and Kakutani T.: \emph{Weak nonlinear dispersive waves: a discussion centered around the Korteweg-de Vries equation.} SIAM Review, vol 14 no 4, pp. 582--643 (1972).


\bibitem{kaku} Kakutani T. and Ono H.: \emph{Weak non linear hydromagnetic waves in a cold collision free plasma.}  J. Phys. Soc. Japan, 26, 1305--1318  (1969).


\bibitem{kato} Kato T.: \emph{On the Cauchy problem for the (generalized) Korteweg-de Vries
	equations.} Advances in Mathematics Suplementary Studies, Stud. Appl.
Math. 8, 93--128 (1983).

\bibitem{kawa} Kawahara T.: \emph{Oscillatory solitary waves in dispersive media.} J. Phys. Soc. Japan. 33, 260--264 (1972).


\bibitem{ponce1} Kenig C. E., Ponce G. and  Vega L.: \emph{ Higher -order nonlinear dispersive equations,} Proc. Amer. Math. Soc., 122 (1), 157--166 (1994).


\bibitem{ponce2} Kenig C.E., Ponce G. and Vega L.: Well-posedness and
scattering results for the generalized Korteweg-de Vries equation
and the contraction principle. Commun. Pure Appl. Math. 46 No 4,
527--620 (1993).


\bibitem{kuvsh} Kuvshinov R. V., Faminskii A. V.:
\emph{A mixed problem in a half-strip for the Kawahara equation.} (Russian)
Differ. Uravn. 45, N. 3, 391-402 (2009); translation in Differ. Equ.
45  N. 3, 404--415 (2009).

\bibitem{lar} N. A. Larkin;
\emph{Correct initial boundary value problems for
	dispersive equations,} J. Math. Anal. Appl. 344 (2008) 1079--1092.


\bibitem{lar2} Larkin N. A.:\emph{ Korteweg-de Vries and
	Kuramoto-Sivashinsky Equations in Bounded Domains.} J. Math. Anal.
Appl. 297, 169--185 (2004).

\bibitem{larluch} N. A. Larkin and J. Luchesi, \textit{Higher-order stationary dispersive equations on bounded intervals: a relation between the order of an equation and the growth of its convective term}, arXiv:1806.08803v1 [math.AP] 22 Jun 2018.


\bibitem{marcio} Larkin N. A., Sim\~{o}es M. H.: \emph{The Kawahara equation on bounded intervals and on a half-line.}
Nonlinear Analysis. 127, pp. 397--412 (2015).


\bibitem{lipaz} F. Linares and A. Pazoto; \emph{ On the exponential decay of the critical generalized Korteweg-de Vries equation with localized damping.} Proc. Amer. Math. Soc., 135, 1 (2007) 1515-1522.

\bibitem{martel} Y. Martel and F. Merle; \emph{ Instability of solutions for the critical generalized Korteweg-de Vries equation.} Geometrical and Funct. Analysis, 11 (2001) 74-123.

\bibitem{merle}  F. Merle; \emph{ Existence of blow up solutions in the energy space for the critical generalized KdV equation.} J. Amer. Math. Soc., 14 (2001) 555-578.



\bibitem{niren} Nirenberg L: \emph{On elliptic partial differential equations},
Annali della Scuola Nomale Superiore di Pisa, Classe di Scienze 3ª série, tome 13, nº 2 (1959), p. 115-162.


\bibitem{pilod} Pilod D.: \emph{On the Cauchy problem for higher-order nonlinear dispersive equations.} Journal of Differential Equations. 245, 2055--2077 (2008).


\bibitem{Rosier}
\newblock L. Rosier and Bing-Yu Zhang, 
\newblock Global stabilization of the generalized Korteweg-de Vries equation on a finite domain,
\newblock SIAM J. Control Optim. vol. 45, No. 3 (2006), 927-956.


\bibitem{Sangare}
\newblock K. Sangare and A.V. Faminskii, 
\newblock Weak solutions of a mixed problem in a half-strip for a generalized Kawahara equation,
\newblock Mathematical Notes, vol. 85 (2009), 90-100.


\bibitem{saut} Saut J. -C.: \emph{Sur quelques g\'{e}n\'{e}ralizations de l'\'{e}quation de Korteweg- de
	Vries.} J. Math. Pures Appl. 58, 21--61 (1979).


\bibitem{tao}  Tao S. P., Cui S.B.,
\emph{ The local and global existence of the solution of the Cauchy problem for the seven-order nonlinear equation,}
Acta Matematica Sinica, 25 A , (4)  451-460 (2005).


\bibitem{zheng} Zheng S.: \emph{Nonlinear evolution equations}, Chapman Hill/CRC, 2004.





\end{thebibliography}
\end{document}